\newcommand{\be}[1]{\operatorname{e}_{\{#1\}}}
\newcommand{\mb}[1]{\mathbb{#1}}
\newcommand{\C}{\mb{C}}
\newcommand{\QQ}{\mb{Q}}
\newcommand{\R}{\mb{R}}
\newcommand{\Real}{\R}
\newcommand{\Complex}{\mathbb C}
\newcommand{\Quat}{\mathbb H}
\newtheorem{Lemma}{Lemma}
\newtheorem{Theorem}{Theorem}
\newtheorem{Proposition}{Proposition}
\newtheorem{Corollary}[Lemma]{Corollary}
\newtheorem{Remark}{Remark}
\newtheorem{Definition}{Definition}
\begin{document}

\title[Quasi-Clifford algebras and Hadamard matrices]
{Gasti\-neau-Hills' quasi-Clifford algebras \\ and plug-in constructions for Hada\-mard \\ matrices}

\author[Paul Leopardi]{Paul~C.~Leopardi}%
\address{%
Australian Government - Bureau of Meteorology.\\
The University of Melbourne.}
\email{paul.leopardi@gmail.com}

\begin{abstract}
The quasi-Clifford algebras as described by Gasti\-neau-Hills in 1980 and 1982, should be better known,
and have only recently been rediscovered.
These algebras and their representation theory provide effective tools to address the following problem
arising from a plug-in construction for Hada\-mard matrices:
Given $\lambda$, a pattern of amicability / anti-amicability, with $\lambda_{j,k}=\lambda_{k,j}=\pm 1$,
find a set of $n$ monomial $\{-1,0,1\}$ matrices $D$ of minimal order such that
\begin{align*}
D_j D_k^T - \lambda_{j,k} D_k D_j^T &= 0 \quad (j \neq k).
\end{align*}
\end{abstract}
\subjclass{Primary 16G30; Secondary 15B34}
\keywords{Quasi-Clifford algebra, Hada\-mard matrix}
\maketitle

\section{Introduction}
\label{sec-Introduction}

The work of Gasti\-neau-Hills on quasi-Clifford algebras \cite{Gas80,Gas82} should be better known.
In particular, as at April 2018, his paper on the subject \cite{Gas82} had only four citations
other than self-citations, according to Google, \cite{DelF11,Leo14Constructions,Mar2018classification,Seb17gastineau},
and only one of these \cite{Mar2018classification} discusses quasi-Clifford algebras and their representation theory to any depth.

Since the quasi-Clifford algebras are a fundamental and natural generalization of the Clifford algebras,
these algebras, or some subset of them, as well as their representation theory, have been rediscovered
or partially rediscovered a number of times.
The rediscoveries include
\begin{itemize}
\item
da Rocha and Vaz' extended Clifford algebras \cite{daR06} which are doubled real Clifford algebras,
including the simplest case of a quasi-Clifford algebra that is not itself a Clifford algebra;
\item
Rajan and Rajan's extended Clifford algebras \cite{Raj07algebraic,Raj07stbcs},
which are a subset of the special quasi-Clifford algebras; and
\item
Marchuk's extended Clifford algebras \cite{Mar2018classification},
which correspond to the special quasi-Clifford algebras over the field of real numbers.
\end{itemize}
The partial rediscoveries include near misses,
such as the rediscoveries of the finite groups generated by the basis elements
of the quasi-Clifford algebras, and their representations:
\begin{itemize}
\item
The finite groups generated by the basis elements of the quasi-Clifford algebras are signed groups
and have real monomial representations of the order of a power of two \cite{Cra95}.
\item
The non-abelian extensions of $C_2$ by $C_2^k$ as classified by de Launey and Flannery's Theorem 21.2.3
\cite[Section 21.2]{DelF11} and the finite groups generated by the basis elements of the quasi-Clifford algebras
are connected by their relationship to the finite groups generated by the basis elements of Clifford algebras.
The correspondence between the two deserves further investigation.
See in particular, Lam and Smith's classification of the finite groups generated by the basis elements of Clifford algebras \cite{LamS89}.
See also the classifications given by de Launey and Smith \cite{DelS01}.
\end{itemize}

The original application of quasi-Clifford algebras and their representation theory
was to systems of orthogonal designs \cite{Gas80,Gas82,Seb17gastineau}.
The current paper applies quasi-Clifford algebras and their representation theory
to the study of some plug-in constructions for Hada\-mard matrices described by the author in 2014 \cite{Leo14Constructions}.
The key question addressed is:
Given $\lambda$, a pattern of amicability / anti-amicability, with $\lambda_{j,k}=\lambda_{k,j}=\pm 1$,
find a set of $n$ monomial $\{-1,0,1\}$ matrices $D$ of minimal order such that
\begin{align*}
D_j D_k^T - \lambda_{j,k} D_k D_j^T &= 0 \quad (j \neq k).
\end{align*}
Specifically, this paper contains a new proof of Theorem 5 of \cite{Leo14Constructions} that
answers Question 1 of that paper.

The remainder of the paper is organized as follows.
Section \ref{sec-Quasi-Clifford} outlines Gasti\-neau-Hills' theory of quasi-Clifford algebras.
Section \ref{sec-Plug-in} revises the plug-in constructions for Hada\-mard matrices.
Section \ref{sec-SQC-Plug-in} uses the theory of real Special quasi-Clifford algebras to address questions related to those constructions.

\section{Quasi-Clifford algebras}\label{sec-Quasi-Clifford}

Humphrey Gasti\-neau-Hills fully developed the theory of quasi-Clifford algebras in his thesis of 1980
\cite{Gas80} and published the key results in a subsequent paper \cite{Gas82}.
The paper describes the theory of quasi-Clifford algebras in full generality for fields of
characteristic other than 2. This paper uses only the properties of quasi-Clifford algebras
over the real field, and after giving the general definition, this section presents a summary of
Gasti\-neau-Hills' constructions and results in this case.
\newpage
\begin{Definition}
\label{Gas82-2-1}
\cite[(2.1)]{Gas82}
Let $F$ be a commutative field of characteristic not 2, $m$ a
positive integer, $(\kappa_i),\ 1 \leqslant i \leqslant m$
a family of non-zero elements of $F$, and $(\delta_{i,j}),\ 1 \leqslant i\leqslant j \leqslant m$ a
family of elements from $\{0, 1\}$.
The quasi-Clifford, or QC, algebra $\mathcal{C} = \mathcal{C}_F [m, (\kappa_i), (\delta_{i,j})]$
is the algebra (associative, with a 1) over $F$ on $m$ generators
$\alpha_1,\ldots, \alpha_m,$ with defining relations
\begin{align}
\alpha_i^2 = \kappa_i, \quad \alpha_j \alpha_i = (-1)^{\delta_{i,j}} \alpha_i \alpha_j \quad (i < j)
\end{align}
(where $\kappa_i$ of $F$ is identified with $\kappa_i$ times the $1$ of $\mathcal{C}$).
\end{Definition}

If all $\delta_{i,j} = 1$ we have a Clifford algebra corresponding to some non-singular
quadratic form on $F^m$ \cite{Lam73}. If in addition each $\kappa_i = \pm 1$ we have the
Special Clifford algebras studied by Kawada and Imahori \cite{Kaw50structure},
Porteous \cite{Por69,Por95} and Lounesto \cite{Lou97} amongst others.

\begin{Theorem}
\label{Gas82-2-3}
\cite[(2.3)]{Gas82}
The QC algebra $\mathcal{C}$ of Definition \ref{Gas82-2-1} has dimension $2^m$ as a vector space over $F$,
and a basis is $\{\alpha_1^{\epsilon_1}\ldots \alpha_m^{\epsilon_m}, \epsilon_i = 0\ \text{or}\ 1\}$.
\end{Theorem}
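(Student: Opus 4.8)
The plan is to prove the two bounds $\dim_F \mathcal{C} \leqslant 2^m$ and $\dim_F \mathcal{C} \geqslant 2^m$ by separate arguments: a normal-form (spanning) argument for the upper bound, and an explicit representation for the lower bound. For the upper bound, note that $\mathcal{C}$ is generated as an $F$-algebra by $\alpha_1,\ldots,\alpha_m$, so every element is an $F$-linear combination of words in the generators and it suffices to reduce an arbitrary word to a scalar multiple of one of the listed monomials. The commutation relation $\alpha_j\alpha_i=(-1)^{\delta_{i,j}}\alpha_i\alpha_j$ for $i<j$ lets one sort any word into non-decreasing index order at the cost of a sign, and $\alpha_i^2=\kappa_i$ then collapses each repeated generator, extracting a scalar power of $\kappa_i$ and leaving an exponent in $\{0,1\}$. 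Iterating, every word equals a scalar times some $\alpha_1^{\epsilon_1}\cdots\alpha_m^{\epsilon_m}$, so these $2^m$ monomials span $\mathcal{C}$ and $\dim_F\mathcal{C}\leqslant 2^m$.

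For the lower bound I would build an explicit module. Let $V=F^{2^m}$ have basis $\{e_x : x\in\{0,1\}^m\}$, let $f_i$ denote the string that flips the $i$-th coordinate, and set $\sigma_i(x)=\sum_{j>i}\delta_{i,j}\,x_j$. Define $A_i\in\operatorname{End}(V)$ by $A_i e_x=\kappa_i^{x_i}(-1)^{\sigma_i(x)}e_{x\oplus f_i}$. Since $\sigma_i$ does not involve $x_i$, applying $A_i$ twice recovers $e_x$ with coefficient $\kappa_i^{x_i}\kappa_i^{1-x_i}=\kappa_i$, so $A_i^2=\kappa_i I$. For $i<j$ a direct comparison of $A_jA_i e_x$ and $A_iA_j e_x$ shows the two differ by exactly the factor $(-1)^{\delta_{i,j}}$, the asymmetry arising solely because flipping the $j$-th coordinate changes $\sigma_i$ by $\delta_{i,j}$ while flipping the $i$-th coordinate leaves $\sigma_j$ unchanged. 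Thus $A_jA_i=(-1)^{\delta_{i,j}}A_iA_j$, and by the universal property of the presentation these assignments extend to an algebra homomorphism $\rho\colon\mathcal{C}\to\operatorname{End}(V)$ with $\rho(\alpha_i)=A_i$.

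To finish I would test the monomials on the single vector $e_0$. Each $A_i$ carries a basis vector to a \emph{nonzero} scalar (a sign times a power of the nonzero $\kappa_i$) times another basis vector, so $\rho(\alpha_1^{\epsilon_1}\cdots\alpha_m^{\epsilon_m})\,e_0=c_\epsilon\,e_\epsilon$ with $c_\epsilon\neq0$, the index $\epsilon$ recording precisely which generators occur. Distinct $\epsilon$ give distinct basis vectors, so the operators $\rho(\alpha^\epsilon)$, and hence the monomials $\alpha^\epsilon$ themselves, are linearly independent over $F$; this gives $\dim_F\mathcal{C}\geqslant 2^m$, and combined with the spanning bound it shows the $2^m$ monomials are a basis. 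The step that requires genuine care is the construction of the $A_i$: the coefficients $\kappa_i^{x_i}(-1)^{\sigma_i(x)}$ must be chosen so that the square relations \emph{and} the full $\delta$-pattern of (anti-)commutations hold at once, and checking the mixed relations for $i<j$ is the crux of the argument.
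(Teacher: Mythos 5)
Your proof is correct. Note that the paper you are being compared against does not actually prove this statement: Theorem \ref{Gas82-2-3} is quoted verbatim from Gastineau-Hills \cite[(2.3)]{Gas82} as background, so there is no in-paper argument to match. Your two-sided argument is the standard one and is essentially equivalent to Gastineau-Hills' own: the rewriting step (sort a word by the $\pm1$ commutation relations, then collapse squares via $\alpha_i^2=\kappa_i$) gives spanning, and linear independence comes from exhibiting a module on which the monomials act by carrying $e_0$ to nonzero multiples of distinct basis vectors. Your module $V=F^{2^m}$ with $A_i e_x=\kappa_i^{x_i}(-1)^{\sigma_i(x)}e_{x\oplus f_i}$ is just the regular module of the twisted group algebra of $C_2^m$ realized concretely, and the cocycle bookkeeping checks out: $\sigma_i$ depends only on coordinates $j>i$, so $A_i^2=\kappa_i I$, and for $i<j$ the only asymmetry between $A_jA_ie_x$ and $A_iA_je_x$ is the term $\delta_{i,j}$ picked up by $\sigma_i(x\oplus f_j)$, giving exactly $A_jA_i=(-1)^{\delta_{i,j}}A_iA_j$. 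You correctly identify the one point needing care, namely that $\kappa_i\neq 0$ is what makes the coefficients $c_\epsilon$ nonzero and hence the monomials independent.
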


This paper concentrates on the QC algebras for which each $\kappa_i = \pm 1$.
Gasti\-neau-Hills call such algebras Special quasi-Clifford, or SQC, algebras.

Also, from this point onwards, the field $F$ is the real field $\Real$,
and Gasti\-neau-Hills' key theorems and constructions are summarised for this case.
Additionally, the real Special Clifford algebras are referred to simply as Clifford algebras,
and the notation of Porteous \cite{Por69,Por95} and Lounesto \cite{Lou97}
is used for these algebras and their representations.

Gasti\-neau-Hills \cite{Gas82}
uses the notation $[\alpha_1,\ldots,\alpha_m]$ for the QC algebra
generated by $\alpha_1,\ldots,\alpha_m$, and the following notation for two special cases.
In the case of a single generator, $\C_b := [\beta]$ where $\beta^2=b$.
For a pair of anti-commuting generators, $\QQ_{c,d} := [\gamma,\delta]$ where $\gamma^2=g$, $\delta^2=d$.
This notation yields the following isomorphisms between these low dimensional real SQC algebras and their corresponding Clifford algebras \cite[(2.2)]{Gas82}.

\begin{align}
\C_{-1} &\simeq \R_{0,1} \simeq \C,
\notag
\\
\C_1 &\simeq \R_{1,0} \simeq {}^2\R,
\label{Gas82-2-2}
\\
\QQ_{-1,-1} &\simeq \R_{0,2} \simeq \Quat,
\notag
\\
\QQ_{-1,1} &\simeq \R_{1,1} \simeq \R(2),
\notag
\\
\QQ_{1,-1} &\simeq \R_{1,1} \simeq \R(2),
\notag
\\
\QQ_{1,1} &\simeq \R_{2,0} \simeq \R(2).
\notag
\end{align}

Gasti\-neau-Hills first decomposition theorem in the special case of real SQC algebras is as follows.
\begin{Theorem}
\label{Gas82-2-7}
\cite[(2.7)]{Gas82}
Any real SQC algebra $\mathcal{C}[m, (\kappa_i), (\delta_{i,j})] = [\alpha_1,\ldots,\alpha_m]$ is expressible
as a tensor product over $\R$:
\begin{align}
\mathcal{C}
&\simeq \C_{b_1} \otimes \ldots \otimes \C_{b_r} \otimes \QQ_{c_1,d_1} \otimes \ldots \otimes \QQ_{c_s,d_s}
\notag
\\
&= [\beta_1]  \otimes \ldots \otimes [\beta_r] \otimes [\gamma_1,\delta_1] \otimes \ldots \otimes [\gamma_s,\delta_s]
\label{eq-Gas82-2-7-i}
\end{align}
where $r, s \geqslant 0,$ $r + 2 s = m,$ and each $b_i, c_j, d_k$ is $\pm 1.$

Each $\beta_i, \gamma_j, \delta_k$
(where $\beta_i^2=b_i, \gamma_j^2=c_j, \delta_k^2=d_k$ and all pairs
commute except $\delta_i \gamma_i = -\gamma_i \delta_i, 1 \leqslant i \leqslant s$)
is, to within multiplication by $\pm 1$,
one of the basis elements $\alpha_1^{\epsilon_1}\ldots\alpha_m^{\epsilon_m}$ of $\mathcal{C}$.
Conversely each $\alpha_1^{\epsilon_1}\ldots\alpha_m^{\epsilon_m}$ is,
to within division by $\pm 1$,
one of
\begin{align*}
\beta_1^{\theta_1}\ldots\beta_r^{\theta_r}\gamma_1^{\phi_1}\delta_1^{\psi_1}\ldots\gamma_s^{\phi_s}\delta_s^{\psi_s}
\end{align*}
(each $\theta_i$, $\phi_j$, $\psi_k$ = 0 or 1).
Thus the latter
$2^{r+2s} = 2^m$ elements form a new basis of $\mathcal{C}$, and $\{\beta_i, \gamma_j, \delta_k\}$ is a new set of
generators.
\end{Theorem}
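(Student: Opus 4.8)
\emph{Proof proposal.} The plan is to encode the commutation data $(\delta_{i,j})$ as an alternating bilinear form over $\mb{F}_2$ and then reduce it to symplectic normal form. First I would record how two monomial basis elements $\alpha^{\epsilon} := \alpha_1^{\epsilon_1}\cdots\alpha_m^{\epsilon_m}$ and $\alpha^{\eta}$ interact: repeated use of the defining relations $\alpha_j\alpha_i = (-1)^{\delta_{i,j}}\alpha_i\alpha_j$ gives $\alpha^{\epsilon}\alpha^{\eta} = (-1)^{B(\epsilon,\eta)}\alpha^{\eta}\alpha^{\epsilon}$, where $B$ is the alternating $\mb{F}_2$-bilinear form on $\mb{F}_2^m$ determined by $B(e_i,e_j)=\delta_{i,j}$ for $i<j$ and zero diagonal. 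Moreover, since reordering the product $\alpha^{\epsilon}\alpha^{\epsilon}$ and substituting $\alpha_i^2=\kappa_i=\pm 1$ leaves only a sign, every monomial satisfies $(\alpha^{\epsilon})^2\in\{\pm 1\}$. Thus the exponent vectors carry all the relevant structure: two monomials commute or anticommute precisely according to the value of $B$, and each squares to $\pm 1$.

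Next I would apply the classification of alternating forms over $\mb{F}_2$ to obtain a basis $f_1,g_1,\ldots,f_s,g_s,h_1,\ldots,h_r$ of $\mb{F}_2^m$ (with $r+2s=m$) in symplectic normal form, so that $B(f_j,g_j)=1$ for each $j$ while every other pair of distinct basis vectors is $B$-orthogonal. Setting $\gamma_j := \alpha^{f_j}$, $\delta_j := \alpha^{g_j}$, and $\beta_k := \alpha^{h_k}$ produces elements that are, by the previous paragraph, monomials in the $\alpha_i$ squaring to values $c_j,d_j,b_k\in\{\pm 1\}$; by the orthogonality relations the $\beta_k$ are central, each pair $(\gamma_j,\delta_j)$ anticommutes internally, and generators from distinct blocks commute. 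Because the chosen vectors form a basis of $\mb{F}_2^m$, every original generator $\alpha_i=\alpha^{e_i}$ is a signed product of the new generators, so the $\beta_k,\gamma_j,\delta_j$ generate $\mathcal{C}$.

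Finally I would assemble the tensor decomposition. The central $\beta_k$ yield commuting copies of $\C_{b_k}=[\beta_k]$, each anticommuting pair yields a copy of $\QQ_{c_j,d_j}=[\gamma_j,\delta_j]$, and since generators from distinct blocks commute, the universal property of the tensor product of algebras supplies an algebra homomorphism $\C_{b_1}\otimes\cdots\otimes\C_{b_r}\otimes\QQ_{c_1,d_1}\otimes\cdots\otimes\QQ_{c_s,d_s}\to\mathcal{C}$. Its domain has dimension $2^r\,4^s=2^{r+2s}=2^m$, and its image contains all the generators, hence equals $\mathcal{C}$, which has dimension $2^m$ by Theorem \ref{Gas82-2-3}; a surjection between spaces of equal finite dimension is an isomorphism, yielding \eqref{eq-Gas82-2-7-i}. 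The companion statement about bases follows because the map sends the standard product basis $\beta_1^{\theta_1}\cdots\beta_r^{\theta_r}\gamma_1^{\phi_1}\delta_1^{\psi_1}\cdots\gamma_s^{\phi_s}\delta_s^{\psi_s}$ to the corresponding $2^m$ elements of $\mathcal{C}$.

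I expect the main obstacle to be the sign bookkeeping rather than the linear algebra. Over $\mb{F}_2$ the reduction of $B$ is routine, but each row operation $f_j\mapsto f_j+f_k$ replaces a generator by a product of monomials, and one must check that the reordering signs combine with the values $\kappa_i$ so that the new generator still squares to a definite $\pm 1$ and that the claimed (anti)commutation is \emph{exact} rather than merely up to sign. The standing normalization ``to within multiplication by $\pm 1$'' in the statement is exactly what lets this bookkeeping close, since it absorbs the signs produced when passing between a monomial $\alpha^{f_j+f_k}$ and the product $\alpha^{f_j}\alpha^{f_k}$.
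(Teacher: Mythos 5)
Your proposal is correct. Note that the paper itself states this theorem without proof, citing it directly from Gastineau-Hills \cite[(2.7)]{Gas82}, so the comparison is really with his original argument, which proceeds by induction on $m$: if all generators commute, split off $[\alpha_1]$ as a tensor factor; otherwise pick an anticommuting pair $\gamma_1=\alpha_1$, $\delta_1=\alpha_2$, multiply each remaining $\alpha_k$ by whichever of $1,\gamma_1,\delta_1,\gamma_1\delta_1$ makes the product commute with both $\gamma_1$ and $\delta_1$, and recurse on the resulting smaller QC algebra. Your route packages that same adjustment globally as the reduction of the alternating form $B$ on $\mathbb{F}_2^m$ to symplectic normal form (hyperbolic planes plus radical), which is a genuine, if mild, difference in organization. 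What your version buys is transparency of the invariants --- $s$ is half the rank of $B$ and $r$ the dimension of its radical, so $r$ and $s$ are visibly determined by $(\delta_{i,j})$ alone, and the centre computation of Lemma \ref{Gas82-2-8} falls out immediately --- together with a clean avoidance of sign bookkeeping, since you define $\gamma_j,\delta_j,\beta_k$ directly as standard monomials $\alpha^{f_j},\alpha^{g_j},\alpha^{h_k}$, whose exact (anti)commutation is governed by $B$ with no ambiguity; the only signs that appear are in $\alpha^{u}\alpha^{v}=\pm\alpha^{u+v}$ and in the values $b_i,c_j,d_k$, and the statement only requires these up to $\pm1$. The inductive version buys explicitness (a concrete recursive recipe for the new generators) and avoids invoking the classification of alternating forms. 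Your closing step --- the surjection from the tensor product supplied by the universal property, pinned down by the dimension count $2^r4^s=2^m$ against Theorem \ref{Gas82-2-3} --- is exactly the right way to finish and matches how the dimension theorem is used in \cite{Gas82}.
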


Here the tensor product $\otimes$ is the real tensor product of real algebras, as per Porteous \cite{Por69}.

Gastineau-Hills \cite{Gas82} goes on to investigate the Wedderburn structure of the real SQC algebras by first determining the centre of
each algebra, and then determining the irreducible representations.

\begin{Lemma}
\label{Gas82-2-8}
\cite[(2.8)]{Gas82}
The centre of $\mathcal{C} = [\beta_1] \otimes \ldots \otimes [\beta_r] \otimes [\gamma_1,\delta_1] \otimes \ldots \otimes [\gamma_s,\delta_s]$
($\beta_i, \gamma_j, \delta_k$ as in Theorem \ref{Gas82-2-7}) is the
$2^r$-dimensional subalgebra $[\beta_1]  \otimes \ldots \otimes [\beta_r]$.
\end{Lemma}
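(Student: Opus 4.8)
The plan is to reduce the centrality condition to commutation with the $2s$ generators $\gamma_1,\delta_1,\ldots,\gamma_s,\delta_s$ (the $\beta_i$ being central among the generators by construction) and then to exploit the fact that, in the new basis supplied by Theorem~\ref{Gas82-2-7}, each of these generators acts diagonally by conjugation with eigenvalues $\pm1$. Write a general element in the new basis as
\[
x = \sum_M a_M\, M, \qquad M = \beta_1^{\theta_1}\cdots\beta_r^{\theta_r}\gamma_1^{\phi_1}\delta_1^{\psi_1}\cdots\gamma_s^{\phi_s}\delta_s^{\psi_s},
\]
the sum ranging over all $2^m$ exponent choices.

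First I would record the commutation rules. Since all pairs of the new generators commute except $\gamma_k\delta_k=-\delta_k\gamma_k$, moving $\gamma_k$ past $M$ picks up the single sign $(-1)^{\psi_k}$ coming from the factor $\delta_k^{\psi_k}$, so that $\gamma_k M = (-1)^{\psi_k} M \gamma_k$; symmetrically $\delta_k M = (-1)^{\phi_k} M \delta_k$, while every $\beta_i$ commutes with $M$. Thus conjugation by $\gamma_k$ (respectively $\delta_k$) is a linear automorphism of $\mathcal{C}$ that is diagonal in the new basis, sending $M \mapsto (-1)^{\psi_k}M$ (respectively $M \mapsto (-1)^{\phi_k}M$).

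Next I would cut out the centre as an intersection of eigenspaces. An element $x$ lies in the centre if and only if it commutes with each generator, and since the generators generate $\mathcal{C}$ this is equivalent to $x$ being fixed by each of the conjugations above. Because each conjugation is diagonal in the new basis, $\gamma_k x \gamma_k^{-1}=x$ forces $a_M=0$ whenever $\psi_k=1$, and $\delta_k x \delta_k^{-1}=x$ forces $a_M=0$ whenever $\phi_k=1$. Imposing these conditions for all $k$ leaves exactly the monomials with every $\phi_k=\psi_k=0$, that is, the $2^r$ products $\beta_1^{\theta_1}\cdots\beta_r^{\theta_r}$. Hence the centre is spanned by these, which is precisely the subalgebra $[\beta_1]\otimes\cdots\otimes[\beta_r]$, of dimension $2^r$.

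The only points requiring care, rather than any genuine obstacle, are the reduction to generators (using that the centralizer of a generating set is the whole centre) and the bookkeeping of signs when commuting $\gamma_k$ or $\delta_k$ through $M$. The diagonal-conjugation observation then makes the spanning statement automatic, so that no nontrivial linear combination of non-central basis monomials can sneak into the centre, and the dimension count follows at once.
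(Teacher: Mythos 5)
Your argument is correct: the commutation signs $\gamma_k M = (-1)^{\psi_k} M \gamma_k$ and $\delta_k M = (-1)^{\phi_k} M \delta_k$ are right, the monomials $M$ form a basis by Theorem~\ref{Gas82-2-7} so the coefficient-by-coefficient elimination is valid, and the surviving span is exactly $[\beta_1]\otimes\cdots\otimes[\beta_r]$. The paper itself only cites this result from Gastineau-Hills without reproducing a proof, but your eigenspace-of-conjugation argument is the standard one and matches the intended reasoning.
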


\begin{Remark}
\label{Gas82-2-9}
\cite[(2.9)]{Gas82}
The converse of Theorem \ref{Gas82-2-7} is obviously also true:
any algebra of the form \eqref{eq-Gas82-2-7-i} is a QC algebra.
Indeed, regarded as an algebra
on the generators $\{\beta_i, \gamma_j, \delta_k\}$, $\mathcal{C}$ of the form \eqref{eq-Gas82-2-7-i} is the QC algebra
$\mathcal{C}[r + 2s, (\kappa_i), (\delta_{i,j})]$
where $\kappa_1,\ldots, \kappa_{r+2s} = b_1,\ldots, b_r, c_1, d_1, \ldots c_s, d_s,$
respectively, and all $\delta_{i,j} = 0$ except $\delta_{r+2i-1, r+2i} = 1$ for $1 \leqslant i \leqslant s.$
\end{Remark}

\begin{Theorem}
\label{Gas82-2-10}
\cite[(2.10)]{Gas82}
The class of SQC algebras over $\R$ is the smallest class which is
closed under tensor products over $\R$ and which contains the Clifford algebras.
It is the smallest class
which is closed under tensor products over $\R$ and contains the algebras $\C_b$, $\QQ_{c,d}$
($b, c, d = \pm 1$).
The Clifford algebras are the QC algebras with 1- or
2-dimensional centres (general QC algebras can have $2^r$-dimensional centres, $r$ any
non-negative integer).
\end{Theorem}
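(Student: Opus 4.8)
The plan is to derive all three assertions from the decomposition of Theorem~\ref{Gas82-2-7}, its converse (Remark~\ref{Gas82-2-9}), the identification of the low-dimensional factors as Clifford algebras in~\eqref{Gas82-2-2}, and the computation of the centre in Lemma~\ref{Gas82-2-8}. No new structural input should be needed beyond these, together with the classical tensor identities for real Clifford algebras.

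For the two ``smallest class'' statements I would argue by mutual inclusion. First, the class of real SQC algebras is closed under $\otimes_\R$: if $\mathcal{A}$ and $\mathcal{B}$ are SQC then by Theorem~\ref{Gas82-2-7} each is a tensor product of factors $\C_b$ and $\QQ_{c,d}$, hence so is $\mathcal{A}\otimes\mathcal{B}$, which is therefore again SQC by Remark~\ref{Gas82-2-9}. Since the Clifford algebras are exactly the SQC algebras with all $\delta_{i,j}=1$, the SQC class is a tensor-closed class containing the Clifford algebras, so it contains the smallest such class. Conversely, let $\mathcal{K}$ be any tensor-closed class containing all Clifford algebras. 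By~\eqref{Gas82-2-2} each $\C_b$ and each $\QQ_{c,d}$ is a Clifford algebra, so all of these lie in $\mathcal{K}$; by Theorem~\ref{Gas82-2-7} every SQC algebra is a tensor product of such factors and hence lies in $\mathcal{K}$. Thus the SQC class is contained in every such $\mathcal{K}$, giving the first characterization. The identical argument, now using that the generators $\C_b,\QQ_{c,d}$ are themselves SQC and are precisely the factors produced by Theorem~\ref{Gas82-2-7}, yields the second.

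For the centre statement I would use Lemma~\ref{Gas82-2-8}: writing $\mathcal{C}$ in the normal form of Theorem~\ref{Gas82-2-7}, its centre is $[\beta_1]\otimes\cdots\otimes[\beta_r]$, of dimension $2^r$, so it is $1$- or $2$-dimensional exactly when $r\in\{0,1\}$; the final parenthetical remark is then immediate, since Remark~\ref{Gas82-2-9} realises every $r\geqslant 0$. It remains to match $r\in\{0,1\}$ with being a Clifford algebra. In the direction ``Clifford $\Rightarrow$ small centre'', a Clifford algebra $\R_{p,q}$ has centre spanned by $1$, together with the pseudoscalar when $p+q$ is odd, hence of dimension $1$ or $2$. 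In the direction ``small centre $\Rightarrow$ Clifford'', when $r=0$ we have $\mathcal{C}\simeq\QQ_{c_1,d_1}\otimes\cdots\otimes\QQ_{c_s,d_s}$, and when $r=1$ one extra factor $\C_{b_1}$ is present; by~\eqref{Gas82-2-2} every factor is a Clifford algebra, and I would absorb them one at a time using the standard real Clifford-algebra tensor identities
\begin{align*}
\R_{p,q}\otimes\R_{1,1}&\simeq\R_{p+1,q+1}, \\
\R_{p,q}\otimes\R_{2,0}&\simeq\R_{q+2,p}, \\
\R_{p,q}\otimes\R_{0,2}&\simeq\R_{q,p+2},
\end{align*}
(with the analogous identities absorbing a single $\R_{1,0}$ or $\R_{0,1}$ in the case $r=1$), concluding by induction on $s$ that $\mathcal{C}$ is a Clifford algebra.

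The main obstacle is precisely this last absorption step. Because $\otimes_\R$ here is the \emph{ungraded} real tensor product, under which the factors commute, a tensor product of Clifford algebras need not be a Clifford algebra in general: for instance $\C_{-1}\otimes\C_{-1}\simeq{}^2\Complex$ has a $4$-dimensional centre. So the conclusion genuinely depends on the restriction $r\in\{0,1\}$ and on the specific identities above, not on any formal closure property. A cleaner alternative, especially for the case $r=1$, is to invoke Wedderburn--Artin together with Frobenius: for $r=0$ the algebra is central simple over $\R$ of dimension $2^{2s}$, hence isomorphic to some $\R(k)$ or $\Quat(k)$, while for $r=1$ its centre is $\Complex$ or ${}^2\R$, forcing $\Complex(k)$, ${}^2\R(k)$ or ${}^2\Quat(k)$; each of these appears in the classification of real Clifford algebras, which again settles the claim.
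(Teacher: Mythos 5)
The paper does not prove this theorem: it is quoted verbatim from Gastineau-Hills \cite[(2.10)]{Gas82} as part of a survey of his results, so there is no in-paper argument to compare against. Your reconstruction is correct and is assembled from exactly the ingredients the paper places around the statement: Theorem~\ref{Gas82-2-7} and Remark~\ref{Gas82-2-9} give tensor-closedness of the SQC class and feed the mutual-inclusion argument for both ``smallest class'' characterizations; Lemma~\ref{Gas82-2-8} gives the $2^r$-dimensional centre; and the identifications \eqref{Gas82-2-2} together with the Clifford tensor identities (essentially Lemma~\ref{Gas82-3-4}) handle the converse direction of the centre statement. This is surely close to Gastineau-Hills' own route, and the $r\leqslant 1$ criterion you arrive at is exactly what the paper's later Theorem~\ref{Gas82-3-7} records.

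One imprecision worth flagging: in the $r=1$ case there is no uniform identity of the form $\R_{p,q}\otimes\R_{1,0}\simeq\R_{p',q'}$ or $\R_{p,q}\otimes\R_{0,1}\simeq\R_{p',q'}$ with which to ``absorb'' the single commuting factor --- indeed $\R_{1,0}\otimes\R_{0,1}\simeq{}^2\Complex$ is not isomorphic to any real Clifford algebra (that example has $r=2$, so it does not contradict the theorem, but it shows such an identity cannot exist in general). What actually closes the gap is the alternative you state at the end: first collapse the $s$ quaternion-type factors to a central simple algebra $\Real(2^s)$ or $\Quat(2^{s-1})$ using your three identities (or Lemma~\ref{Gas82-3-4}(iii)), then tensor with ${}^2\R$ or $\C$ to obtain one of ${}^2\Real(2^s)$, ${}^2\Quat(2^{s-1})$, $\Complex(2^s)$, and finally check via the mod-$8$ classification that each of these Wedderburn types of dimension $2^{2s+1}$ is realized by some $\R_{p',q'}$ with $p'+q'=2s+1$. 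That last realization step is genuinely needed and should be made explicit; with it, the proof is complete.
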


\begin{Theorem}
\label{Gas82-2-11}
\cite[(2.11)]{Gas82}
Every real SQC algebra $\mathcal{C}[m, (\kappa_i), (\delta_{i,j})]$ is semi-simple.
\end{Theorem}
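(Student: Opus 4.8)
The plan is to combine the tensor-product decomposition of Theorem \ref{Gas82-2-7} with the good behaviour of \emph{separable} algebras under tensor product. First I would invoke Theorem \ref{Gas82-2-7} to write $\mathcal{C}$ as a tensor product over $\R$ of the one- and two-generator factors $\C_{b_i}$ and $\QQ_{c_j,d_j}$, and then use the identifications \eqref{Gas82-2-2} to recognise each factor as one of $\C$, ${}^2\R$, $\Quat$, or $\R(2)$. Each of these four algebras is a finite-dimensional semi-simple $\R$-algebra: $\C$ and $\Quat$ are division rings, ${}^2\R$ is a product of two copies of the field $\R$, and $\R(2)$ is a full matrix algebra over $\R$; in each case the Jacobson radical is zero.

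The remaining task is to pass from semi-simplicity of the individual factors to semi-simplicity of their tensor product, and this is where I expect the only real subtlety to lie. Over an arbitrary field the tensor product of two semi-simple algebras need not be semi-simple; the property that \emph{is} stable under tensor product is separability. Since $\R$ has characteristic zero it is a perfect field, so every finite-dimensional semi-simple $\R$-algebra is separable; in particular each factor $\C$, ${}^2\R$, $\Quat$, $\R(2)$ is separable. A tensor product over $F$ of separable $F$-algebras is again separable, and a separable algebra is semi-simple, so an induction on the number of tensor factors shows that $\mathcal{C}$ is semi-simple.

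An alternative, fully explicit route avoids the separability theorem. Using the standard real tensor-product identities
\begin{align*}
\C \otimes \C &\simeq {}^2\C, & \C \otimes \Quat &\simeq \C(2), & \Quat \otimes \Quat &\simeq \R(4),
\end{align*}
together with ${}^2\R \otimes A \simeq {}^2 A$ and $\R(p) \otimes \R(q) \simeq \R(pq)$, one collapses the whole tensor product step by step into a direct sum of full matrix algebras over $\R$, $\C$, or $\Quat$; each such matrix algebra is simple, so the outcome is a direct sum of simple algebras and hence semi-simple. The bookkeeping here is routine but tedious, which is why I regard the separability argument as the cleaner path. A third option is to argue via Maschke's theorem: the basis elements together with $-1$ generate a finite $2$-group $G$, the element $z = -1$ of $G$ is central of order two, and $\mathcal{C}$ is the direct summand $\tfrac{1}{2}(1-z)\,\R[G]$ cut out by the central idempotent $\tfrac{1}{2}(1-z)$; since $\R[G]$ is semi-simple by Maschke, so is this summand.
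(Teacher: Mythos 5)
Your proposal is correct, but note that the paper itself states this theorem without proof, citing it directly from Gastineau-Hills \cite[(2.11)]{Gas82}; the closest thing to an argument in the paper is the chain Theorem~\ref{Gas82-2-7} $\rightarrow$ Remark~\ref{Gas82-3-3} $\rightarrow$ Lemma~\ref{Gas82-3-4} $\rightarrow$ Theorem~\ref{Gas82-3-7}, which exhibits every real SQC algebra as a direct sum of full matrix algebras over $\R$, $\C$ or $\Quat$ and hence yields semi-simplicity as a corollary of the full Wedderburn classification. That chain is precisely your second, ``fully explicit'' route, so on that reading you recover the paper's implicit argument. Your primary route via separability is genuinely different and arguably cleaner: it uses only that each factor in the decomposition of Theorem~\ref{Gas82-2-7} is finite-dimensional semi-simple over the perfect field $\R$, and it decouples semi-simplicity from the detailed bookkeeping of Lemma~\ref{Gas82-3-4}; the price is an appeal to the nontrivial fact that separability is preserved under tensor products, and you are right to flag that plain semi-simplicity is not so preserved over imperfect fields. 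Your third route, via the signed group $G$ of order $2^{m+1}$ and the central idempotent $\tfrac12(1-z)$ in $\R[G]$, is also correct (the dimension count $2^{m+1}/2 = 2^m$ confirms that the summand is the whole SQC algebra) and is the most robust of the three: it needs neither Theorem~\ref{Gas82-2-7} nor perfectness, only that $|G|$ is invertible in the ground field, which matches the generality of Gastineau-Hills' original statement for fields of characteristic not $2$ and connects naturally to the signed-group viewpoint of Craigen mentioned in the paper's introduction.
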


\begin{Remark}
\label{Gas82-3-2}
\cite[(3.2)]{Gas82}
There are irreducible representations of
$\C_b,$ $\QQ_{c,d}$ $(b, c, d = \pm 1)$ in which $\beta,\gamma,\delta$ are each represented by monomial
$\{-1,0,1\}$ matrices.
\end{Remark}

\begin{Remark}
\label{Gas82-3-3}
\cite[(3.3)]{Gas82}
Following from \eqref{Gas82-2-2}
the decomposition of a real SQC algebra takes
(possibly after reordering the factors) the form:
\begin{align}
\mathcal{C}
&= [\alpha_1,\ldots,\alpha_m]
\notag
\\
&\simeq
{}^2\R \otimes \ldots \otimes {}^2\R \otimes
\C \otimes \ldots \otimes \C \otimes
\Quat \otimes \ldots \otimes \Quat \otimes
\R(2) \otimes \ldots \otimes\R(2)
\label{eq-Gas-3-3}
\\
&\simeq
[\beta_1]  \otimes \ldots \otimes [\beta_r] \otimes
[\gamma_1,\delta_1] \otimes \ldots \otimes [\gamma_s,\delta_s]
\notag
\end{align}
where each $\beta_i, \gamma_j, \delta_k$ is plus or minus a product of the $\alpha_i$, and conversely each $\alpha_i$ is
plus or minus a product of the $\beta_i, \gamma_j, \delta_k$.
In general, each of ${}^2\R$, $\C$, $\Quat$, $\R(2)$ may appear zero or more times in the tensor product \eqref{eq-Gas-3-3}.
\end{Remark}

We now come to a well known lemma used in the representation theory of real and complex Clifford algebras.
\begin{Lemma}
\label{Gas82-3-4}
\cite[(3.4)]{Gas82} \cite[Prop. 10.44]{Por69} \cite[Prop. 11.9]{Por95}

(i) $\C \otimes \C \simeq {}^2\R \otimes \C \simeq {}^2\C.$

(ii) $\C \otimes \Quat \simeq \C \otimes \R(2) \simeq \C(2).$

(iii) $\Quat \otimes \Quat \simeq \R(2) \otimes \R(2) \simeq \R(4).$
\end{Lemma}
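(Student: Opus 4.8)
The plan is to establish each of the three lines by reducing to a handful of standard isomorphisms and then matching real dimensions, using that every algebra in sight is semi-simple by Theorem \ref{Gas82-2-11}. Two universal identities do most of the bookkeeping: for any real algebra $A$ one has $M_n(\R)\otimes_\R A\simeq M_n(A)$ (spread the matrix entries over $A$), and ${}^2\R\otimes_\R A\simeq A\oplus A$ (tensor distributes over the direct sum $\R\oplus\R$). The first identity gives at once $\R(2)\otimes\R(2)=M_2(\R)\otimes M_2(\R)\simeq M_2(M_2(\R))=\R(4)$ and $\C\otimes\R(2)\simeq M_2(\C)=\C(2)$; the second gives ${}^2\R\otimes\C\simeq\C\oplus\C={}^2\C$. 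So it remains to treat $\C\otimes\C$, $\C\otimes\Quat$, and $\Quat\otimes\Quat$ directly, and to confirm the real dimensions agree in each line (namely $4,4$ in (i); $8,8,8$ in (ii); $16,16,16$ in (iii)).

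For (i) I would identify the second factor with $\R[x]/(x^2+1)$, so that $\C\otimes_\R\C\simeq\C[x]/(x^2+1)$. Over $\C$ the polynomial $x^2+1=(x-i)(x+i)$ splits into distinct linear factors, so the Chinese Remainder Theorem yields $\C[x]/(x^2+1)\simeq\C\times\C={}^2\C$. Equivalently, the product $i\otimes j$ of the two imaginary units squares to $1\otimes 1$ and is central, so $\tfrac12(1\otimes 1\pm i\otimes j)$ are complementary central idempotents that split $\C\otimes\C$ into two copies of $\C$.

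For (ii) I would complexify the quaternions: fixing a faithful real representation $\Quat\hookrightarrow\C(2)$ (sending $1,i,j,k$ to $I$ together with suitable anti-Hermitian matrices) and sending the central $\C$ of the first factor to the complex scalar matrices, one extends to a $\C$-algebra homomorphism $\C\otimes\Quat\to\C(2)$. The images of $1,i,j,k$ are $\C$-linearly independent and hence span $M_2(\C)$, so the map is surjective, and as both sides have real dimension $8$ it is an isomorphism. For (iii) I would use the two-sided regular representation of $\Quat$ on itself: identifying $\Quat\simeq\R^4$, the assignment $p\otimes q\mapsto(x\mapsto p\,x\,\overline{q})$ defines a real-algebra homomorphism $\Quat\otimes\Quat\to\operatorname{End}_\R(\Quat)\simeq\R(4)$, where $\overline{q}$ denotes the quaternion conjugate.

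The steps needing the most care are the two structural maps in (ii) and (iii), and in particular the passage from ``nonzero homomorphism'' to ``isomorphism'' between algebras of equal dimension. The cleanest route is simplicity: $\Quat$ is a \emph{central simple} $\R$-algebra, and a tensor product of central simple $\R$-algebras is again central simple, so both $\C\otimes\Quat$ and $\Quat\otimes\Quat$ are simple; their only proper two-sided ideal is $0$, whence any nonzero homomorphism out of them is injective, and equality of real dimensions then forces it onto. For (iii) there is the additional subtlety that quaternion conjugation is an anti-automorphism, which is exactly why the right factor must act through $\overline{q}$ rather than $q$: without the conjugate the map would respect the multiplication of $\Quat^{\mathrm{op}}$ instead of that of $\Quat$ and would fail to be a homomorphism. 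Verifying the homomorphism property together with the simplicity conclusion is the crux; the remaining dimension counts and the two universal identities are routine.
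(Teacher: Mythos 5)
The paper offers no proof of this lemma: it is stated as well known and referred to the cited sources \cite{Gas82,Por69,Por95}. Your argument is correct and is essentially the standard one found in those references --- the central idempotents $\tfrac12(1\otimes 1\pm i\otimes j)$ (equivalently the Chinese Remainder Theorem) for (i), complexification of $\Quat$ inside $\C(2)$ for (ii), and the two-sided regular representation $p\otimes q\mapsto(x\mapsto p\,x\,\overline{q})$ for (iii) --- so there is nothing to compare beyond noting agreement. One small imprecision: $\C$ is not central simple over $\R$ (its centre is all of $\C$), so to deduce that $\C\otimes\Quat$ is simple you need the slightly sharper fact that the tensor product of a central simple algebra with a simple algebra is simple; this is immaterial here, since for (ii) your surjectivity argument together with the equality of real dimensions already forces injectivity without any appeal to simplicity, and for (iii) both factors are genuinely central simple.
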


Remark \ref{Gas82-3-3} and the repeated application of Lemma \ref{Gas82-3-4} lead to the following result.
\begin{Theorem}
\label{Gas82-3-7}
\cite[(3.7)]{Gas82}

The Wedderburn structure of a real SQC algebra
$\mathcal{C}[m, (\kappa_i), (\delta_{i,j})]$
as a direct sum of full matrix algebras over division
algebras is (depending on $m, (\kappa_i), (\delta_{i,j})$) one of

(i) ${}^{2^r}\Real(2^s)$,

(ii) ${}^{2^{r-1}}\C \otimes \Real(2^s)$, or

(iii) ${}^{2^r}\Quat \otimes \Real(2^{s-1})$,

where in each case $r + 2 s = m,$ and $2^r$ is the dimension of the centre.
Conversely (as in Remark \ref{Gas82-2-9}) any such algebra (i), (ii) or (iii) is an SQC algebra
$\mathcal{C}[r + 2 s, (\kappa_i), (\delta_{i,j})]$
with respect to certain generators.
Also (as in Theorem \ref{Gas82-2-10}) the subclass of
algebras with structures (i), (ii) or (iii) for which $r \leqslant 1$ is precisely the class of
algebras isomorphic to Clifford algebras on $r + 2 s$ generators.
\end{Theorem}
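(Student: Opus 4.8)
The plan is to begin with the tensor-product decomposition of Remark \ref{Gas82-3-3} and to collapse it to one of the three canonical forms by repeated application of the isomorphisms of Lemma \ref{Gas82-3-4}. First I would write
\[
\mathcal{C} \simeq ({}^2\R)^{\otimes a} \otimes \C^{\otimes b} \otimes \Quat^{\otimes c} \otimes \R(2)^{\otimes d},
\]
where ${}^2\R$ occurs $a$ times, $\C$ occurs $b$ times, $\Quat$ occurs $c$ times and $\R(2)$ occurs $d$ times, so that $a+b=r$ and $c+d=s$ with $r+2s=m$, and the real dimension is $2^a 2^b 4^c 4^d = 2^m$. By Lemma \ref{Gas82-2-8} the single-generator factors ${}^2\R$ and $\C$ account for the $2^r$-dimensional centre, while the pair factors $\Quat$ and $\R(2)$ supply the matrix order. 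The reduction then splits according to whether any complex factor is present.

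If $b=0$, I would first collapse the matrix part using Lemma \ref{Gas82-3-4}(iii): the pairing $\Quat\otimes\Quat\simeq\R(4)$ means only the parity of $c$ matters, so that $\Quat^{\otimes c}\otimes\R(2)^{\otimes d}\simeq\R(2^s)$ when $c$ is even and $\simeq\Quat\otimes\R(2^{s-1})$ when $c$ is odd. Tensoring with $({}^2\R)^{\otimes a}={}^{2^r}\R$ then yields (i) or (iii) respectively. If instead $b\geqslant 1$, the decisive point is that a single complex factor annihilates the quaternionic type: since $\Quat\otimes\Quat\simeq\R(2)\otimes\R(2)$ I may assume $c\in\{0,1\}$, and then $\C\otimes\Quat\simeq\C\otimes\R(2)$ from Lemma \ref{Gas82-3-4}(ii) lets me trade the remaining $\Quat$ for an $\R(2)$, reaching $c=0$. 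The matrix part is now $\R(2)^{\otimes s}\simeq\R(2^s)$, while the central part $({}^2\R)^{\otimes a}\otimes\C^{\otimes b}$ simplifies by Lemma \ref{Gas82-3-4}(i) to ${}^{2^a}\R\otimes{}^{2^{b-1}}\C\simeq{}^{2^{r-1}}\C$; together these give ${}^{2^{r-1}}\C\otimes\R(2^s)$, which is (ii).

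The main obstacle I anticipate is precisely this $b\geqslant 1$ case: one must be sure that the quaternionic class really is killed once a complex factor appears, and that the surviving simple components are uniformly $\C(2^s)$ rather than a mixture of sizes. This is exactly what Lemma \ref{Gas82-3-4}(ii) delivers, since every $\C\otimes\Quat$ and $\C\otimes\R(2)$ collapses to $\C(2)$, so that no quaternionic summand can persist; the dimension count $2^{r-1}\cdot 2\cdot(2^s)^2=2^m$ then forces the common order $2^s$. In the other two cases the analogous counts $2^r\cdot(2^s)^2=2^m$ and $2^r\cdot 4\cdot(2^{s-1})^2=2^m$ pin down the matrix order.

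For the converse and the Clifford characterisation I would argue briefly as follows. Any algebra of the form (i), (ii) or (iii) can be rewritten as a tensor product of factors $\C_b$ and $\QQ_{c,d}$ by reversing the isomorphisms \eqref{Gas82-2-2} and Lemma \ref{Gas82-3-4}, whence Remark \ref{Gas82-2-9} exhibits it as an SQC algebra on suitable generators. Finally, since the centre has dimension $2^r$, it is $1$- or $2$-dimensional exactly when $r\leqslant 1$; comparing with Theorem \ref{Gas82-2-10}, which identifies the Clifford algebras as the QC algebras with $1$- or $2$-dimensional centre, gives the stated identification of the structures with $r\leqslant 1$ as precisely the Clifford algebras.
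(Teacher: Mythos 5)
Your proposal is correct and follows essentially the same route the paper indicates: the paper derives Theorem \ref{Gas82-3-7} from the decomposition of Remark \ref{Gas82-3-3} by repeated application of Lemma \ref{Gas82-3-4}, which is precisely your collapse of $({}^2\R)^{\otimes a}\otimes\C^{\otimes b}\otimes\Quat^{\otimes c}\otimes\R(2)^{\otimes d}$ into the three canonical forms, with the converse and the $r\leqslant 1$ characterisation handled via Remark \ref{Gas82-2-9} and Theorem \ref{Gas82-2-10} exactly as the statement itself suggests. Your case analysis on $b$ and the parity of $c$ is a sound and complete expansion of the argument the paper leaves implicit.
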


\begin{Corollary}
\label{Gas82-3-8}
\cite[(3.8)]{Gas82}
In case (i) of Theorem \ref{Gas82-3-7} there are $2^r$ inequivalent irreducible representations, of order $2^s$;
in case (ii) $2^{r-1}$ of order $2^{s+1}$, and
in case (iii) $2^r$ of order $2^{s+1}.$
Any representation must be of order a multiple of (i) $2^s$, (ii) $2^{s+1}$, (iii) $2^{s+1}$ respectively.
\end{Corollary}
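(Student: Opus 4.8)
The plan is to read off both the number and the order of the irreducible representations directly from the Wedderburn decomposition of Theorem~\ref{Gas82-3-7}, invoking only the standard representation theory of semisimple real algebras. By Theorem~\ref{Gas82-2-11} the algebra $\mathcal{C}$ is semi-simple, so it is a direct sum of simple two-sided ideals, each of the form $D(n)$ (the algebra of $n \times n$ matrices over a real division algebra $D \in \{\Real, \C, \Quat\}$). For such a direct sum the inequivalent irreducible representations are in bijection with the simple summands: each summand $D(n)$ carries the single irreducible left module $D^n$, on which all the other summands act as zero, and distinct summands yield inequivalent representations since they have distinct annihilators.

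First I would record the real dimensions $\dim_\Real \Real = 1$, $\dim_\Real \C = 2$, $\dim_\Real \Quat = 4$, so that the irreducible module $D^n$ of $D(n)$, viewed as a real vector space, has real dimension---and hence the corresponding real representation has order---equal to $n\,\dim_\Real D$. Then I would apply this to the three structures of Theorem~\ref{Gas82-3-7} in turn. In case (i), $\mathcal{C} \simeq {}^{2^r}\Real(2^s)$ has $2^r$ summands $\Real(2^s)$, giving $2^r$ inequivalent irreducibles of order $2^s \cdot 1 = 2^s$. In case (ii), $\mathcal{C} \simeq {}^{2^{r-1}}\C(2^s)$ has $2^{r-1}$ summands $\C(2^s)$, giving $2^{r-1}$ inequivalent irreducibles of order $2^s \cdot 2 = 2^{s+1}$. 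In case (iii), $\mathcal{C} \simeq {}^{2^r}\Quat(2^{s-1})$ has $2^r$ summands $\Quat(2^{s-1})$, giving $2^r$ inequivalent irreducibles of order $2^{s-1} \cdot 4 = 2^{s+1}$.

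For the final assertion I would again use semisimplicity, which forces every representation of $\mathcal{C}$ to be a direct sum of irreducible representations. The decisive observation is that in each of the three cases all the simple summands are mutually isomorphic, so all the irreducible representations share a common order---namely $2^s$, $2^{s+1}$, $2^{s+1}$ in cases (i), (ii), (iii) respectively. Consequently the order of an arbitrary representation, being a sum of these equal summands, is necessarily a multiple of that common value.

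I do not anticipate a substantive obstacle: the result is a bookkeeping application of the Wedderburn--Artin theorem once Theorem~\ref{Gas82-3-7} is available. The single point requiring care is the distinction between the $D$-dimension and the $\Real$-dimension of the irreducible module $D^n$: since ``order'' refers to the size of the real monomial $\{-1,0,1\}$ matrices, it is the real dimension $n\,\dim_\Real D$ that is relevant, and overlooking the factors $\dim_\Real \C = 2$ and $\dim_\Real \Quat = 4$ would misstate the orders in cases (ii) and (iii).
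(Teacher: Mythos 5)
Your proposal is correct and follows essentially the same route the paper intends: the paper states this as an immediate corollary of the Wedderburn structure in Theorem~\ref{Gas82-3-7} (citing Gastineau-Hills without writing out the details), and your argument is exactly the standard bookkeeping --- one irreducible per simple summand, of real dimension $n\,\dim_\Real D$, with the divisibility claim following from complete reducibility and the fact that all summands in each case are isomorphic. You correctly handle the one point where care is needed, namely the factors $\dim_\Real\C=2$ and $\dim_\Real\Quat=4$ in cases (ii) and (iii).
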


As a result of the well-known constructions that lead to Remark \ref{Gas82-3-2},
Gasti\-neau-Hills establishes the following result.
\begin{Theorem}
\label{Gas82-3-10}
\cite[(3.10)]{Gas82}
Each representation of a real SQC algebra
\newline
$\mathcal{C}[m, (\kappa_i), (\delta_{i,j})]$ on generators
$(\alpha_i)$ is equivalent to a matrix representation in which each $\alpha_i$ corresponds to a monomial
$\{-1,0,1\}$ matrix, which is therefore orthogonal.
\end{Theorem}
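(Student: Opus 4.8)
The plan is to reduce the statement to the irreducible representations and then to build those explicitly from the monomial representations of the tensor factors supplied by Remark \ref{Gas82-3-2}. First I would invoke semi-simplicity: by Theorem \ref{Gas82-2-11} the algebra $\mathcal{C}$ is semi-simple, so every representation is equivalent to a direct sum of irreducible representations. Since a block-diagonal direct sum of monomial $\{-1,0,1\}$ matrices is again a monomial $\{-1,0,1\}$ matrix, it suffices to prove that each irreducible representation of $\mathcal{C}$ is equivalent to one in which every $\alpha_i$ is monomial. The final clause of the theorem is then automatic: a monomial matrix $M$ with entries in $\{-1,0,1\}$ is a signed permutation matrix, and such a matrix satisfies $M M^T = I$, so it is orthogonal.

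Next I would set up the construction. By Theorem \ref{Gas82-2-7} write $\mathcal{C} \simeq \C_{b_1} \otimes \ldots \otimes \C_{b_r} \otimes \QQ_{c_1,d_1} \otimes \ldots \otimes \QQ_{c_s,d_s}$ on the generators $\beta_i, \gamma_j, \delta_k$. Remark \ref{Gas82-3-2} gives, for each factor, an irreducible representation in which the generators are monomial $\{-1,0,1\}$ matrices; for a factor $\C_1 \simeq {}^2\R$ there are two such (the two signs $\beta \mapsto \pm 1$), and for the remaining factors a single one up to equivalence. I would record the routine closure facts that both the Kronecker product and the ordinary matrix product of monomial $\{-1,0,1\}$ matrices are again monomial $\{-1,0,1\}$. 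Forming the Kronecker product of one chosen factor representation for each tensor factor then produces a representation $\rho$ of $\mathcal{C}$ in which each $\beta_i, \gamma_j, \delta_k$, embedded as $I \otimes \ldots \otimes M \otimes \ldots \otimes I$, is monomial. By Theorem \ref{Gas82-2-7} each original generator $\alpha_i$ is, to within a factor $\pm 1$, a product of the $\beta_i, \gamma_j, \delta_k$; by the closure facts $\alpha_i$ is therefore represented by a monomial $\{-1,0,1\}$ matrix in $\rho$.

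The main obstacle is to show that these monomial tensor-product representations realise, up to equivalence, \emph{every} one of the inequivalent irreducible representations enumerated in Corollary \ref{Gas82-3-8}, rather than merely some of them. The difficulty is genuinely arithmetic over $\R$: a Kronecker product of irreducible real representations need not be irreducible, as the isomorphisms of Lemma \ref{Gas82-3-4} already show for $\C \otimes \C \simeq {}^2\C$, so one cannot simply equate tensor products of irreducibles with irreducibles. I would handle this by tracking the central character. The centre is $\C_{b_1} \otimes \ldots \otimes \C_{b_r}$ by Lemma \ref{Gas82-2-8}, and the independent sign choices $\beta_i \mapsto \pm 1$ on the factors with $b_i = 1$ produce the required number of distinct central characters, while the factors $\C_{-1} \simeq \C$ and the quaternionic factors fix the Wedderburn division-algebra type. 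Matching this bookkeeping against the counts $2^r$, $2^{r-1}$, $2^r$ and the predicted orders $2^s$, $2^{s+1}$, $2^{s+1}$ of Corollary \ref{Gas82-3-8} confirms that the constructed monomial representations exhaust the irreducibles up to equivalence. Combined with the first paragraph, every representation of $\mathcal{C}$ is then equivalent to a monomial $\{-1,0,1\}$ matrix representation, which is therefore orthogonal.
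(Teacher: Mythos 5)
Your reduction to irreducible representations via Theorem \ref{Gas82-2-11}, the closure facts for monomial $\{-1,0,1\}$ matrices under direct sums, Kronecker products and ordinary products, and the closing remark on orthogonality are all fine, and you have correctly located the crux of the matter. But your proposed resolution of that crux does not close the argument. The Kronecker product of the monomial irreducible representations of the factors supplied by Remark \ref{Gas82-3-2}, taken for the decomposition exactly as delivered by Theorem \ref{Gas82-2-7}, is in general a \emph{reducible} representation of $\mathcal{C}$: for $\QQ_{-1,-1}\otimes\QQ_{-1,-1}\simeq\Quat\otimes\Quat\simeq\R(4)$ the Kronecker product of the two quaternionic representations has order $16$ while the irreducible representations have order $4$. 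A reducible representation is not equivalent to an irreducible one, whatever its central character, so counting central characters cannot show that your constructed representations ``exhaust the irreducibles up to equivalence''; and the irreducible constituents of a reducible monomial representation live on invariant subspaces that need not be coordinate subspaces, so their monomiality does not follow either. A symptom of the same problem appears already in your bookkeeping: for $\C_{-1}\otimes\C_{-1}\simeq{}^2\C$ (case (ii) with $r=2$, $s=0$) there are $2^{r-1}=2$ inequivalent irreducibles of order $2$, but no central factor has $b_i=1$, so your sign choices produce only one representation, and it has order $4$.

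The missing idea is to realise the isomorphisms of Lemma \ref{Gas82-3-4} by explicit changes of generators \emph{inside} the algebra before tensoring: replace a pair of factors $\Quat\otimes\Quat$ by $\R(2)\otimes\R(2)$, a pair $\C\otimes\C$ by ${}^2\R\otimes\C$, and $\C\otimes\Quat$ by $\C\otimes\R(2)$, each new generator being $\pm$ a product of the old ones. After this normalisation the decomposition contains at most one factor $\C$ or at most one factor $\Quat$ and not both, the Kronecker product of the monomial factor representations has order exactly $2^s$, $2^{s+1}$ or $2^{s+1}$ as required by Corollary \ref{Gas82-3-8}, hence is irreducible, and the independent sign choices $\beta_i\mapsto\pm1$ on the ${}^2\R$ factors then do give exactly the right number of pairwise inequivalent monomial irreducibles, so the exhaustion argument goes through. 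This normalisation is precisely the content of the ``well-known constructions'' that the paper invokes just before the theorem (the paper itself only cites Gastineau-Hills rather than reproving the result); without it your third paragraph is a gap, not a proof.
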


\section{Plug-in constructions for Hada\-mard matrices}\label{sec-Plug-in}

A recent paper of the author \cite{Leo14Constructions} describes a generalization of
Williamson's construction for Hada\-mard matrices \cite{Wil44}
using the real monomial representation of the basis elements of the Clifford algebra $\R_{m,m}$.
(Recall that $\R_{p,q}$ is the real universal Clifford algebra of the $2^{p+q}$ dimensional real quadratic space $\R^{p,q}$,
with $p+q$ anticommuting generators, $\be{-q}, \ldots, \be{-1}, \be{1}, \ldots \be{p}$ with $\be{k}^2 = -1$ if $k < 0$, $\be{k}^2 = 1$ if $k > 0$,
and that $\R_{m,m} \simeq \R(2^m)$, the algebra of real matrices with $2^m$ rows and $2^m$ columns \cite{Leo14Constructions,Lou97,Por69}.)

Briefly, the general construction uses some
\begin{align*}
\quad &A_k \in \{-1,0,1\}^{n \times n}, \quad B_k \in \{-1,1\}^{b \times b},
\quad k \in \{1,\ldots,n\},
\end{align*}
where the $A_k$ are \emph{monomial} matrices,
and constructs
\begin{align}
H &:= \sum_{k=1}^n A_k \otimes B_k,
\tag{H0}
\end{align}
such that
\begin{align}
H \in \{-1,1\}^{n b \times n b}
\quad
\text{and}
\quad
H H^T &= n b I_{(n b)},
\tag{H1}
\end{align}
i.e. $H$ is a Hada\-mard matrix of order $n b$.
The paper \cite{Leo14Constructions} focuses on a special case of the construction,
satisfying the conditions

\begin{align}
 A_j \ast A_k = 0 \quad (j \neq k)&, \quad \sum_{k=1}^n A_k \in \{-1,1\}^{n \times n},
\notag
\\
 A_k A_k^T &= I_{(n)},
\notag
\\
 A_j A_k^T + \lambda_{j,k} A_k A_j^T &= 0 \quad (j \neq k),
\label{constructions-4}
\\
 B_j B_k^T - \lambda_{j,k} B_k B_j^T &= 0 \quad (j \neq k),
\notag
\\
 \lambda_{j,k} &\in \{-1,1\},
\notag
\\
\sum_{k=1}^n  B_k B_k^T &= n b I_{(b)},
\notag
\end{align}
where $\ast$ is the Hada\-mard (element-by-element) matrix product.
(That is, $(M \ast N)_{i,j} := M_{i,j} N_{i,j}$ for all pair of matrices $M,N$ of the same shape.)

If, in addition, we stipulate that $A_j^2 = \kappa_j = \pm 1$ for $j$ from 1 to $n$,
we can now recognize that the $n$ matrices $A_1$ to $A_n$ are also the images,
under a real representation of order $n$,
of the generators of a real special quasi-Clifford algebra,
with $\lambda_{j,k}=\kappa_j \kappa_k (-1)^{1+\delta_{j,k}}$.
Thus $n$ must be a power of 2 large enough for this representation to exist, or a multiple of such a power.

In Section 3 of the paper  \cite{Leo14Constructions},
it is noted that the Clifford algebra $\R(2^m) \simeq \R_{m,m}$ has a canonical basis consisting of $4^m$ real monomial matrices
with the following properties:

Pairs of basis matrices either commute or anticommute.
Basis matrices are either symmetric or skew,
and so the basis matrices $A_j, A_k$ satisfy
\begin{align}
 A_k A_k^T &= I_{(2^m)},
\quad
 A_j A_k^T + \lambda_{j,k} A_k A_j^T = 0 \quad (j \neq k),
\quad
\lambda_{j,k} \in \{-1,1\}.
\label{A-property-1}
\end{align}

Additionally, for $n=2^m$, we can choose a transversal of $n$ canonical basis matrices that
satisfies conditions \eqref{constructions-4} on the $A$ matrices,
\begin{align}
 A_j \ast A_k = 0 \quad (j \neq k)&,
\quad
\sum_{k=1}^n A_k \in \{-1,1\}^{n \times n}.
\label{A-property-2}
\end{align}

\section{Special quasi-Clifford algebras applied to the plug-in constructions}\label{sec-SQC-Plug-in}

The properties of the real SQC algebras yield an alternate proof of Theorem 5 of \cite{Leo14Constructions},
and provide an answer to the question of whether the order of the $B$ matrices used in that proof can be improved \cite[Question 1]{Leo14Constructions}.
That theorem is restated here as a proposition.
\begin{Proposition}
\label{pr-construction-works}
\cite[Theorem 5]{Leo14Constructions}
If $n$ is a power of 2, the construction (H0) with conditions \eqref{constructions-4}
can always be completed, in the following sense.
If an $n$-tuple of $A$ matrices which produce a particular $\lambda$
is obtained by taking a transversal of canonical basis matrices of the Clifford algebra $\R_{m,m}$,
an $n$-tuple of $B$ matrices with a matching $\lambda$ can always be found.
\end{Proposition}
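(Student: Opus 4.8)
The plan is to build the $B$ matrices as a monomial representation of a second SQC algebra --- the one carrying the \emph{complementary} amicability pattern --- and then to fill in its zero entries by right-multiplication with a fixed Hadamard matrix.

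First I would make the SQC structure already implicit in the $A$ matrices explicit. Each $A_k$ is a canonical basis matrix of $\R_{m,m}$, hence orthogonal and either symmetric or skew, so $A_k^2 = \kappa_k = \pm 1$ and, as recorded in Section~\ref{sec-Plug-in}, the $A_k$ are the images of the generators of a real SQC algebra $\mathcal{C}[n,(\kappa_j),(\delta_{j,k})]$ with $\lambda_{j,k} = \kappa_j\kappa_k(-1)^{1+\delta_{j,k}}$. The condition demanded of the $B$ matrices, $B_jB_k^T - \lambda_{j,k}B_kB_j^T = 0$, differs from the one on the $A$ matrices only in the sign in front of $\lambda_{j,k}$, so the natural candidate is a monomial representation of the SQC algebra $\mathcal{C}' = \mathcal{C}[n,(\kappa_j),(1-\delta_{j,k})]$ obtained by interchanging the commuting and anticommuting pairs of generators while leaving the squares $\kappa_j$ unchanged. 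By Theorem~\ref{Gas82-3-10} such a representation exists with each generator sent to an orthogonal monomial $\{-1,0,1\}$ matrix $M_k$, and by Corollary~\ref{Gas82-3-8} the $M_k$ may be taken of order $d$, a power of $2$ fixed by the Wedderburn type of $\mathcal{C}'$.

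The central computation I would carry out is that these $M_k$ already obey the relation asked of the $B$ matrices. Orthogonality together with $M_k^2 = \kappa_k I$ gives $M_k^T = M_k^{-1} = \kappa_k M_k$, so $M_k$ is symmetric when $\kappa_k = 1$ and skew when $\kappa_k = -1$. Feeding $M_k^T = \kappa_k M_k$ into $M_jM_k^T$ and $M_kM_j^T$ and using the defining commutation $M_jM_k = (-1)^{1-\delta_{j,k}}M_kM_j$ of $\mathcal{C}'$ yields $M_jM_k^T - \lambda_{j,k}M_kM_j^T = \bigl(\kappa_k(-1)^{1-\delta_{j,k}} - \lambda_{j,k}\kappa_j\bigr)M_kM_j$, and the bracketed scalar vanishes precisely because $\kappa_j^2 = 1$ and $\lambda_{j,k} = \kappa_j\kappa_k(-1)^{1+\delta_{j,k}}$. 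Thus the $M_k$ realise the complementary pattern, though they are monomial rather than full $\pm 1$ matrices.

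The last step fills in the zeros. Let $K$ be a Hadamard matrix of order $d$ --- one exists since $d$ is a power of $2$, for instance a Sylvester matrix --- and set $B_k := M_kK$. Right-multiplying a monomial matrix by a full $\pm 1$ matrix merely permutes and signs its rows, so each $B_k \in \{-1,1\}^{d\times d}$. Since $KK^T = dI$ is scalar, $B_jB_k^T = dM_jM_k^T$ and $B_kB_j^T = dM_kM_j^T$, so the amicability relation passes through unchanged, and $\sum_k B_kB_k^T = d\sum_k M_kM_k^T = dnI_{(d)} = nbI_{(b)}$ with $b = d$. This produces $B$ matrices with the matching $\lambda$. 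I expect the main obstacle to be the identification in the second paragraph: one must verify that the sign reversal relating the two conditions corresponds exactly to complementing the $\delta_{j,k}$ while fixing the $\kappa_j$, and that $\mathcal{C}'$ is a bona fide SQC algebra to which the representation theory of Section~\ref{sec-Quasi-Clifford} applies. Once that is secured, the order $d$ is pinned down by Corollary~\ref{Gas82-3-8}, which is what lets this argument speak to \cite[Question~1]{Leo14Constructions}.
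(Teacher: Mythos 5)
Your proposal is correct and follows essentially the same route as the paper's proof: realise the complementary amicability pattern by monomial matrices coming from a representation of a real SQC algebra (of order a power of $2$ by Corollary~\ref{Gas82-3-8}), then right-multiply by a Sylvester Hadamard matrix to obtain full $\{-1,1\}$ matrices. Your version is in fact more explicit than the paper's, which merely asserts the existence of the monomial $n$-tuple without identifying the algebra $\mathcal{C}[n,(\kappa_j),(1-\delta_{j,k})]$ or carrying out the sign verification.
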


\begin{proof}

~

\begin{enumerate}
\item
For some sufficiently large order $b$, form an $n$-tuple $(D_1, \ldots, D_n)$ of \newline
$\{-1,0,1\}$ monomial matrices whose amicability / anti-amicability graph is the edge-colour complement of that of $(A_1,\ldots,A_n)$.
To be precise,
\begin{align*}
 D_j D_k^T - \lambda_{j,k} D_k D_j^T = 0 \quad (j \neq k),
\end{align*}
where $\lambda$ is given by \eqref{A-property-1}.
This can be done because $D_1, \ldots, D_n$ are the images of generators of some real SQC algebra $\mathcal{C}$,
and therefore $b$ can be taken to be the order of an irreducible real representation of $\mathcal{C}$,
which, by Corollary \ref{Gas82-3-8} is a power of 2.
\item
Since $b$ is a power of 2, we can find a Hada\-mard matrix $S$ of order $b$.
The Sylvester Hadamard matrix of order $b$ will do.
The $n$-tuple $(D_1 S, \ldots, D_n S)$ of $\{-1,1\}$ matrices of order $b$ has the same amicability / anti-amicability graph as that of $(D_1, \ldots, D_n)$.
\item
The $n$-tuple of Hada\-mard matrices
$(B_1,\ldots,B_n)$ $=$ $(D_1 S, \ldots, D_n S)$ of order $b$ satisfies conditions \eqref{constructions-4} on the $B$ matrices, and completes the construction (H0).
\end{enumerate}
\end{proof}

The theory of SQC algebras is described by Gasti\-neau-Hills \cite{Gas80,Gas82} with enough detail to enable a concrete construction
of the type given in the  proof of Proposition \ref{pr-construction-works} to be carried
out for any given pattern of amicability / anti-amicability $\lambda$, and any arbitrary assignment $\kappa$ of squares of generators.

For example, consider the cases where all of the $A$ matrices are pairwise amicable, that is $\lambda_{j,k}=-1$ for $j \neq k$.
We thus require an $n$-tuple of mutually anti-amicable $\{-1,0,1\}$ matrices $(D_1,\ldots,D_n)$.

Consider the generators $\beta_{-q},\ldots,\beta_{-1},\beta_{1},\ldots,\beta_{p}$ where $p+q=n$,
$\beta_j^2=\kappa_j$, with $\kappa_j = -1$ if $j<0$, $\kappa_j=1$ if $j>0$,
and
\begin{align*}
\beta_j \beta_k &+ \kappa_j \kappa_k \beta_k \beta_j = 0.
\end{align*}
Thus generators whose squares have the same sign anticommute, and generators whose squares have opposite signs commute.
For any real monomial representation $\rho$, we have
\begin{align*}
\rho(\beta_j)^T &= \kappa_j \rho(\beta_j),
\intertext{so that}
\rho(\beta_j) \rho(\beta_k)^T
&= \kappa_k \rho(\beta_j) \rho(\beta_k)
= -\kappa_j \rho(\beta_k) \rho(\beta_j)
= -\rho(\beta_k) \rho(\beta_j)^T.
\end{align*}
Thus any representation gives a set of mutually anti-amicable matrices.

We have split the set of $n$ generators into disjoint subsets of size $p$ and $q$, where the generators within each subset pairwise anti-commute,
and each pair of generators, where one is taken from each subset, commute.
The whole set of generators thus generates the algebra $\R_{p,0} \otimes \R_{0,q}$, whose faithful representations are given by Table \ref{Tensor}.

\footnotesize{}
\begin{table}[ht]
\begin{align*}
\begin{array}{c|ccccccccc}
  & q \rightarrow \\
p &          0  &          1     &          2  &          3     &          4  &          5     &          6  &          7     &          8  \\ \hline
0 &    \Real    &    \Complex    &    \Quat    &   {}^2\Quat    &    \Quat(2) &    \Complex(4) &    \Real(8) &{}^2\Real(8)    &   \Real(16)  \\
1 &{}^2\Real    &{}^2\Complex    &{}^2\Quat    &   {}^4\Quat    &{}^2\Quat(2) &{}^2\Complex(4) &{}^2\Real(8) &{}^4\Real(8)    &{}^2\Real(16) \\
2 &    \Real(2) &    \Complex(2) &    \Quat(2) &   {}^2\Quat(2) &    \Quat(4) &    \Complex(8) &    \Real(16)&{}^2\Real(16)   &    \Real(32) \\
3 & \Complex(2) &{}^2\Complex(2) & \Complex(4) &{}^2\Complex(4) & \Complex(8) &{}^2\Complex(8) & \Complex(16)&{}^2\Complex(16)& \Complex(32) \\
4 &    \Quat(2) &    \Complex(4) &    \Real(8) &   {}^2\Real(8) &    \Real(16)&    \Complex(16)&    \Quat(16)&   {}^2\Quat(16)&    \Quat(32) \\
5 &{}^2\Quat(2) &{}^2\Complex(4) &{}^2\Real(8) &   {}^4\Real(8) &{}^2\Real(16)&{}^2\Complex(16)&{}^2\Quat(16)&   {}^4\Quat(16)&{}^2\Quat(32) \\
6 &    \Quat(4) &    \Complex(8) &    \Real(16)&   {}^2\Real(16)&    \Real(32)&    \Complex(32)&    \Quat(32)&   {}^2\Quat(32)&    \Quat(64) \\
7 & \Complex(8) &{}^2\Complex(8) & \Complex(16)&{}^2\Complex(16)& \Complex(32)&{}^2\Complex(32)& \Complex(64)&{}^2\Complex(64)& \Complex(128) \\
8 &    \Real(16)&    \Complex(16)&    \Quat(16)&   {}^2\Quat(16)&    \Quat(32)&    \Complex(64)&    \Real(64)&  {}^2\Real(128)&    \Real(256)
\end{array}
\end{align*}
\caption{Tensor Products of real Clifford algebras $\R_{p,0} \otimes \R_{0,q}$.}
\label{Tensor}
\end{table}
\normalsize{}

\begin{table}[ht]
\begin{align*}
\begin{array}{|ccc|}
\hline
\text{Algebra} & \text{Faithful}       & \text{Irreducible}
\\
               & \text{Representation} & \text{Dimension}
\\
\hline
\R_{2,0} \otimes \R_{0,0} &\Real(2) & 2
\\
\R_{1,0} \otimes \R_{0,1} &{}^2\Complex & 2
\\
\R_{0,0} \otimes \R_{0,2} & \Quat & 4
\\
\hline
\end{array}
\end{align*}
\caption{Tensor Products of real Clifford algebras with $p+q=2$.}
\label{Tensor-p-q-2}
\end{table}

\begin{table}[ht]
\begin{align*}
\begin{array}{|ccc|}
\hline
\text{Algebra} & \text{Faithful}       & \text{Irreducible}
\\
               & \text{Representation} & \text{Dimension}
\\
\hline
\R_{4,0} \otimes \R_{0,0} & \Quat(2) & 8
\\
\R_{3,0} \otimes \R_{0,1} & {}^2\Complex(2) & 4
\\
\R_{2,0} \otimes \R_{0,2} & \Quat(2) & 8
\\
\R_{1,0} \otimes \R_{0,3} & {}^4\Quat & 4
\\
\R_{0,0} \otimes \R_{0,4} & \Quat(2) & 8
\\
\hline
\end{array}
\end{align*}
\caption{Tensor Products of real Clifford algebras with $p+q=4$.}
\label{Tensor-p-q-4}
\end{table}

\begin{table}[ht]
\begin{align*}
\begin{array}{|ccc|}
\hline
\text{Algebra} & \text{Faithful}       & \text{Irreducible}
\\
               & \text{Representation} & \text{Dimension}
\\
\hline
\R_{8,0} \otimes \R_{0,0} & \Real(16) & 16
\\
\R_{7,0} \otimes \R_{0,1} & {}^2\Complex(8) & 16
\\
\R_{6,0} \otimes \R_{0,2} & \Real(16) & 16
\\
\R_{5,0} \otimes \R_{0,3} & {}^4\Real(8) & \ \ 8
\\
\R_{4,0} \otimes \R_{0,4} & \Real(16) & 16
\\
\R_{3,0} \otimes \R_{0,5} & {}^2\Complex(8) & 16
\\
\R_{2,0} \otimes \R_{0,6} & \Real(16) & 16
\\
\R_{1,0} \otimes \R_{0,7} & {}^4\Real(8) & \ \ 8
\\
\R_{0,0} \otimes \R_{0,8} & \Real(16) & 16
\\
\hline
\end{array}
\end{align*}
\caption{Tensor Products of real Clifford algebras with $p+q=8$.}
\label{Tensor-p-q-8}
\end{table}

The relevant representations and the dimensions of the corresponding irreducible real monomial representations for $p+q=2,4$ and $8$
are given by Tables \ref{Tensor-p-q-2} to \ref{Tensor-p-q-8} respectively.
Due to the periodicity of 8 of real representations of real Clifford algebras,
in general, for $n=2^m$, for $m>2$, there exists a real special quasi-Clifford algebra with an
irreducible real monomial representation of order $2^{n/2-1}$ containing $n$ pairwise anti-amicable $\{-1,0,1\}$ matrices.

\paragraph*{Hurwitz-Radon theory.}
\label{sec-Hurwitz-Radon}

The following definition and proposition are taken from the author's recent paper on twin bent functions
and Hurwitz-Radon theory \cite{Leo17Hurwitz}.

A set of real orthogonal matrices $\{A_1,A_2,\ldots,A_s\}$ is called a \emph{Hurwitz-Radon family}
\cite{GerP74a,Hur22,Rad22} if
\begin{enumerate}
 \item
$A_j^T = -A_j$ for all $j=1,\ldots,s$, and
 \item
$A_j A_k = -A_k A_j$ for all $j \neq k$.
\end{enumerate}
The Hurwitz-Radon function $\rho$ is defined by
\begin{align*}
\rho(2^{4 d + c}) &:= 2^c + 8 d, \quad \text{where~} 0 \leqslant c < 4.
\end{align*}
As stated by Geramita and Pullman \cite[Theorem A]{GerP74a}, Radon
proved the following result \cite{Rad22}.
\begin{Proposition}\label{Hurwitz-Radon-lemma}
Any Hurwitz-Radon family of order $N$ has at most $\rho(N)-1$ members.
\end{Proposition}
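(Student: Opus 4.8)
The plan is to recognize a Hurwitz-Radon family as the image of the generators of a real Clifford algebra, and then read off the constraint on $N$ from the Wedderburn structure already recorded in Theorem \ref{Gas82-3-7} and Corollary \ref{Gas82-3-8}. First I would observe that if $\{A_1,\ldots,A_s\}$ is a Hurwitz-Radon family of order $N$, then each $A_j$ is orthogonal and skew, so $A_j A_j^T = I_{(N)}$ together with $A_j^T = -A_j$ gives $A_j^2 = -I_{(N)}$; combined with the relations $A_j A_k = -A_k A_j$ $(j \neq k)$, these are exactly the defining relations of the generators $\be{-s},\ldots,\be{-1}$ of the real Clifford algebra $\R_{0,s}$. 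By the universal property, the assignment $\be{-j} \mapsto A_j$ extends to a real matrix representation $\R_{0,s} \to \R(N)$ of degree $N$; equivalently, $\R^N$ is a (nonzero) $\R_{0,s}$-module.

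Next I would invoke the structure theory. The algebra $\R_{0,s}$ is a real SQC algebra, indeed a Clifford algebra falling under the case $r \leqslant 1$ of Theorem \ref{Gas82-3-7}, hence semisimple by Theorem \ref{Gas82-2-11}. By Corollary \ref{Gas82-3-8} all of its inequivalent irreducible representations share a common order $d(s)$, which is a power of $2$, and every representation has degree a multiple of $d(s)$. Applied to $\R^N$ this forces $d(s) \mid N$, which, since $d(s)$ is a power of $2$, is equivalent to $\nu_2(d(s)) \leqslant \nu_2(N)$, where $\nu_2$ denotes the exponent of $2$. The base values $d(0),\ldots,d(7) = 1,2,4,4,8,8,8,8$ can be read from the $p=0$ column of Table \ref{Tensor}, and the period-$8$ relation $d(s+8) = 16\,d(s)$ propagates them to all $s$.

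The remaining and most fiddly step is the number-theoretic bookkeeping that converts $\nu_2(d(s)) \leqslant \nu_2(N)$ into $s \leqslant \rho(N)-1$. I would tabulate $\nu_2(d(s)) = 0,1,2,2,3,3,3,3$ for $s = 0,\ldots,7$, noting $\nu_2(d(s+8)) = \nu_2(d(s)) + 4$; since this sequence is non-decreasing, the largest $s$ with $\nu_2(d(s)) \leqslant a$ equals the largest $s$ with $\nu_2(d(s)) = a$, which is $0,1,3,7$ for $a = 0,1,2,3$ and increases by $8$ whenever $a$ increases by $4$. Writing $a = \nu_2(N) = 4d+c$ with $0 \leqslant c < 4$ and extending $\rho$ to general orders by $\rho(2^{4d+c} u) := 2^c + 8d$ for $u$ odd, so that $\rho$ depends only on $\nu_2(N)$, this maximal $s$ is exactly $2^c + 8d - 1 = \rho(N) - 1$. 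The main obstacle is simply to align the two periodicities cleanly, the period $8$ of the real Clifford algebras against the $4d+c$ split in the definition of $\rho$, and to verify the four residues $c \in \{0,1,2,3\}$ against the jumps of $\nu_2(d(s))$; once that match is confirmed, the bound $s \leqslant \rho(N) - 1$ follows.
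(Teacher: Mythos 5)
Your argument is correct, but note that the paper itself offers no proof of this Proposition: it is quoted as Radon's theorem via Geramita and Pullman \cite{GerP74a,Rad22}, so there is nothing internal to compare against. What you have written is the standard Clifford-algebra proof of the Hurwitz--Radon upper bound (essentially Eckmann's argument), and it is a genuine addition: identifying a skew, orthogonal, pairwise anticommuting family with a degree-$N$ representation of $\R_{0,s}$, using semisimplicity and Corollary \ref{Gas82-3-8} to force $N$ to be a multiple of the common irreducible degree $d(s)$, and then matching the $2$-adic valuation table $\nu_2(d(s))=0,1,2,2,3,3,3,3$ (period $8$, step $+4$) against the $4d+c$ split in the definition of $\rho$. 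The arithmetic checks out: the largest $s$ with $d(s)\mid N$ is $8d+2^c-1=\rho(N)-1$ when $\nu_2(N)=4d+c$. What your route buys is a proof entirely from results already stated in the paper (Theorems \ref{Gas82-2-11} and \ref{Gas82-3-7}, Corollary \ref{Gas82-3-8}, and the $p=0$ row of Table \ref{Tensor} --- a row, not a column, as you wrote), which fits the paper's theme of deriving such bounds from (quasi-)Clifford representation theory; the cost is that you must silently extend $\rho$ from prime powers $2^{4d+c}$ to general $N$ via $\rho(2^{4d+c}u):=2^c+8d$ for $u$ odd, which is the standard convention but is not spelled out in the paper's definition. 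Two small points worth making explicit: the representation $\R_{0,s}\to\R(N)$ is unital and nonzero, so $\R^N$ really is a nonzero module and the divisibility conclusion of Corollary \ref{Gas82-3-8} applies; and the reduction from ``$\nu_2(d(s))\leqslant\nu_2(N)$'' to ``$s\leqslant\rho(N)-1$'' uses that $s\mapsto\nu_2(d(s))$ is non-decreasing and attains every non-negative integer, both of which follow from your table and the period-$8$ recursion.
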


As an immediate consequence of this proposition, at most $\rho(N)$ monomial
$\{-1,0,1\}$ matrices can be mutually anti-amicable.
The construction above in the case where all of the $A$ matrices are pairwise amicable, $n=2^m$ and $m>2$
corresponds to the case $c=3$, $d=2^{m-3}-1$, since
\begin{align*}
4 d + c &= 2^{m-1} + 4 - 1 = n/2 - 1,
\\
2^c + 8 d &= 8 + 2^m - 8 = n.
\end{align*}

\section{Discussion}

The construction used in the proof of Proposition~\ref{pr-construction-works} is a special case of
the construction (H0) with conditions \eqref{constructions-4}.
All of the low order cases investigated so far have been of this form.
This prompts two questions:
\begin{enumerate}
\item Are all instances of construction (H0) with conditions \eqref{constructions-4}
given by the special construction used in the proof of Proposition~\ref{pr-construction-works}?
\item Must the order of the $B$ matrices used in construction (H0) with conditions \eqref{constructions-4} always be a power of 2?
\end{enumerate}
Perhaps a deeper study of the representation theory of Gasti\-neau-Hills quasi-Clifford algebras could be used to address these questions.

~

%%%%%%%%%%%%%%%%%%%%%%%%%%%%%%%%%%%%%%%%%%%%%%%%%%%%%%%%%%%%%%
\paragraph*{Acknowledgements.}

Thanks to Jennifer Seberry, who supervised the PhD thesis of Humphrey Gasti\-neau-Hills,
and brought his work to the attention of the author.
This work, including previous papers on this topic \cite{Leo14Constructions,Leo15Twin,Leo17Hurwitz}
began in 2007 while the author was a Visiting Fellow at the Australian National University;
continued while the author was a Visiting Fellow and a Casual Academic at the University of Newcastle, Australia;
and concluded while the author was an employee of the Australian Government in the Bureau of Meteorology, and also
an Honorary Fellow of the University of Melbourne.

%%%%%%%%%%%%%%%%%%%%%%%%%%%%%%%%%%%%%%%%%%%%%%%%%%%%%%%%%%%%%%%%%%%%%%

%\bibliographystyle{abbrv-par}

%\bibliography{bib}

%\input{Leopardi-2018-Quasi-Clifford-AACA-revised.bbl}

\end{document}